\definecolor{pink}{rgb}{1,0.03,0.8}
\newtheorem{thm}{Theorem}[section]
\newtheorem{cor}[thm]{Corollary}
\newtheorem{lem}[thm]{Lemma}
\newtheorem{prop}[thm]{Proposition}
\theoremstyle{definition}
\theoremstyle{remark}
\newtheorem{rem}[thm]{Remark}
\numberwithin{equation}{section}
\newtheorem{example}[thm]{Example}
\newcommand{\V}{\mathscr{V}}
\newcommand{\X}{\mathcal{X}}
\newcommand{\K}{\mathscr{P}}
\begin{document}

\title[Asymptotic Behaviors and Hyperstability]{Hyers-Ulam Stability of Quadratic Operators in Locally Convex Cones}

\author [J.-H. Bae,  J. Mohammadpour and  A. Najati] { J.-H. Bae,  J. Mohammadpour and  A. Najati$^*$ }%

\address{J.-H. Bae
\newline
\indent School of Liberal Studies 
\newline \indent   Kyung Hee University
\newline \indent  Korea}
\email{jhbae@khu.ac.kr}
\address{Jafar Mohammadpour and Abbas Najati  
\newline
\indent Department of Mathematics
\newline
\indent Faculty of Mathematical Sciences
\newline \indent   University of Mohaghegh Ardabili
\newline \indent  Ardabil 56199-11367
\newline \indent Iran}
\email{jm8427836@gmail.com,~~ a.nejati@yahoo.com}

\thanks{$*$ Corresponding author}%

\begin{abstract}
The stability problem in Ulam's sense has recently been explored in locally convex cone environments, as shown in \cite{ MNF, NR1, NR2}. In continuation of this research direction, our work examines the stability properties of the quadratic functional  equation
\[
        2f\left(\frac{x+y}{2}\right) + 2f\left(\frac{x-y}{2}\right) = f(x) + f(y)
    \]
 in such structures. We present novel stability theorems that offer enhanced comprehension of operator behavior when subjected to perturbations. These results advance the theoretical framework of Hyers-Ulam stability within locally convex cones while elucidating distinctive characteristics of quadratic operators in this context. Our investigation both strengthens the mathematical underpinnings of stability theory and provides new perspectives on interactions between certain operators and locally convex spaces.\\
\newline
\noindent
{\bf Mathematics Subject Classification.} 39B82, 39B52, 46A03.
\newline
\noindent {\bf Keywords.} Quadratic functional equation, Quadratic mapping,  Hyers-Ulam stability, Locally convex cone.  
\end{abstract}

\maketitle

\section{Introduction}
The concept of locally convex cones was first introduced and further developed in \cite{KR} and \cite{Ro1}.
A cone is a mathematical structure defined by a set
$\K$ equipped with two operations: addition, denoted as $(a,b)\mapsto a+b$, and scalar multiplication, denoted as $(\lambda,a)\mapsto \lambda a$, where
$\lambda$ is a non-negative real number. The addition operation must satisfy the properties of associativity and commutativity, and there exists a neutral element
$0\in\K$ such that:
\begin{itemize}
  \item $(a+b)+c=a+(b+c)$ \quad\text{\rm for all}~ $a,b,c\in\K$,
  \item $a+b=b+a$ \quad \quad\text{\rm for all}~ $a,b\in\K$,
  \item $a+0=a$  \quad \quad\text{\rm for all}~ $a\in\K$.
\end{itemize}
Scalar multiplication must adhere to the standard associative and distributive rules, meaning:
\begin{itemize}
  \item $\lambda(\mu a)=(\lambda\mu)a\quad  \text{\rm for all}~\lambda,\mu\geqslant0~~\text{\rm and}~~ \text{\rm  all}~a\in\K$,
  \item $(\lambda+\mu)a=\lambda a+\mu a \quad  \text{\rm for all}~\lambda,\mu\geqslant0~~\text{\rm and}~~ \text{\rm  all}~a\in\K$,
  \item $\lambda(a+b)=\lambda a+\lambda b \quad  \text{\rm for all}~\lambda\geqslant0~~\text{\rm and}~~ \text{\rm  all}~a,b\in\K$,
  \item $1a=a \quad  \text{\rm for all}~a\in\K$,
  \item $0a=0 \quad  \text{\rm for all}~a\in\K$.
\end{itemize}
This structure ensures that the set $\K$ behaves consistently under both addition and scalar multiplication, maintaining the necessary algebraic properties.
Vector spaces defined over the field of real numbers naturally qualify as cones under this definition. However, cones differ from vector spaces in two key ways: first, they do not require the existence of additive inverses for their elements, and second, scalar multiplication is restricted to non-negative real numbers. Unlike in vector spaces, the property
$0a=0$ must be explicitly stated for cones, as it does not automatically follow from the other defining conditions. This distinction highlights the broader nature of cones compared to vector spaces, as they relax certain structural requirements while maintaining specific algebraic properties. This concept is illustrated by a straightforward example from [31]. Consider the set $\K=\{0,1\}$ with addition defined as $0+0=0, ~0+1=1+0=1$, and
$1+1=1$. Scalar multiplication by non-negative real numbers is defined as
$\lambda a=a$ for all $a\in\K$ and $\lambda\geqslant 0$. Clearly, all the previously mentioned axioms hold except for the last one, and in this case,
$0.1=1$.  It can be readily demonstrated that the neutral element $0\in\K$  for the addition operation is unique. The inclusion of the final axiom is crucial, as it ensures the property $\lambda0=0$  holds for all  $\lambda\geqslant0$ (see \cite[Proposition 1.1.1]{Ro3}).
\par
A subcone $\mathscr{Q}$ of a cone $\K$ is defined as a non-empty subset of $\K$ that remains closed under both the addition operation and scalar multiplication by non-negative real numbers. In other words, if $a,b\in \mathscr{Q}$ and $\lambda\geqslant0$, then $a+b\in \mathscr{Q}$ and $\lambda a\in \mathscr{Q}$.
\par
It is important to note that the (algebraic) cancellation law, which asserts that
\[a+c=b+c~~\text{\rm for}~a,b,c\in\K~~\text{implies}~a=b,\]
is not required in general. This property holds true if and only if the cone $\K$ can be embedded into a real vector space, as demonstrated in \cite[Theorem 1.1.4]{Ro3}.
\begin{example}
 Let $\Bbb{R}$ represent the field of real numbers. In the extended set  $\overline{\Bbb{R}}=\Bbb{R}\cup\{+\infty\}$
we adopt the standard algebraic operations, where $\lambda+(+\infty)=+\infty$ for all
$\lambda\in \overline{\Bbb{R}}, ~ \lambda.(+\infty)=+\infty$  for all  $\lambda>0$ and $0.(+\infty)=0$. Under these operations, $\overline{\Bbb{R}}$  forms a cone. However, since $\lambda+(+\infty)=\mu +(+\infty)$ for any  $\lambda,\mu\in \overline{\Bbb{R}}$,  the cancellation law does not hold in $\overline{\Bbb{R}}$.
Additionally, we use the notations $\overline{\Bbb{R}}_+=\{\lambda\in\overline{\Bbb{R}}:~\lambda\geqslant0\}$ and $\Bbb{R}_+=\{\lambda\in\Bbb{R}:~\lambda\geqslant0\}$ to denote the subcones of $\overline{\Bbb{R}}$ and $\Bbb{R}$, respectively.
\end{example}
\begin{example}
   Let $\K$ be a cone and $\mathcal{X}$  be an arbitrary non-empty set.  For $\K$-valued functions on
$\mathcal{X}$, addition and scalar multiplication can be defined pointwise. The collection
$\mathcal{F}(\mathcal{X},\K)$ of all such functions also forms a cone. In this case, the cancellation law applies to $\mathcal{F}(\mathcal{X},\K)$ if and only if it holds for the original cone $\K$.
\end{example}
A \textit{preordered cone} is a cone $\K $ equipped with a reflexive and transitive relation $\leqslant$ which is compatible with both the addition and scalar multiplication operations. Specifically, for all $a,b,c \in\K_1 $ and $\lambda\geqslant0$, the following properties hold:
\[
a\leqslant b \implies a+c \leqslant b+c \quad \text{and} \quad \lambda a \leqslant \lambda b.
\]
It is worth noting that the relation $\leqslant$ is not required to satisfy anti-symmetry, meaning that $a\leqslant b$ and $b\leqslant a$ do not necessarily imply
$a=b$.
As equality in $\K$ is obviously such an order, all results about ordered cones apply to cones without
order structures as well.
Every ordered vector space inherently satisfies the conditions to be considered a preordered cone. Additionally, the cones $\mathbb{\overline{R}} = \mathbb{R} \cup \{+\infty\}$ and $\mathbb{\overline{R}}_+ = [0,+\infty]$, when equipped with the standard order and algebraic operations (notably, $0 \cdot (+\infty) = 0)$ are examples of preordered cones.
Now, let  $(\K,\leqslant)$ be an preordered cone and $\X$ be an arbitrary non-empty set. The cone $\mathcal{F}(\X,\K)$
of all $\K$-valued functions on $\X$, when endowed with the pointwise order, also forms a preordered cone.
\par
Every cone $\K$ can be endowed with a natural preorder, defined by the relation
$a\leqslant b$ if there exists an element $c\in\K$ such that $a+c=b$. This preorder structure is inherent to the cone and does not require additional assumptions.
Every subcone of a preordered cone  $(\K,\leqslant)$ naturally inherits the structure of a preordered cone. Additionally, a form of weak order cancellation holds, as described below:
\begin{prop}\cite{KR}
 Let $(\K,\leqslant)$ be a preordered cone. If $a+c\leqslant b+c$ for some $a,b,c\in\K$, then $a+\varepsilon c\leqslant b+\varepsilon c$ for all $\varepsilon>0$.
\end{prop}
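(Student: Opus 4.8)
The plan is to derive the conclusion using \emph{only} reflexivity, transitivity, and the two compatibility axioms of the preorder, together with the distributive law of the cone; no cancellation is available. The single admissible ``move'' I will use repeatedly is this: from the hypothesis $a+c\leqslant b+c$, multiply by any $\lambda\geqslant0$ (scalar compatibility) to get $\lambda a+\lambda c\leqslant\lambda b+\lambda c$, then add a fixed element $w\in\K$ to both sides (additive compatibility) to obtain $w+\lambda(a+c)\leqslant w+\lambda(b+c)$. Every step below is an instance of this move, and steps are linked by transitivity. I split the argument into the regimes $\varepsilon\geqslant1$ and $0<\varepsilon<1$.

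The case $\varepsilon\geqslant1$ is immediate: writing $\varepsilon=1+(\varepsilon-1)$ with $\varepsilon-1\geqslant0$, I add the fixed element $(\varepsilon-1)c$ to both sides of $a+c\leqslant b+c$ and use the distributive law $c+(\varepsilon-1)c=\varepsilon c$ to get $a+\varepsilon c\leqslant b+\varepsilon c$.

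The genuine obstacle is $0<\varepsilon<1$, where no single move suffices: applying the move once with $\lambda=\varepsilon$ and $w=(1-\varepsilon)a$ only yields $a+\varepsilon c\leqslant(1-\varepsilon)a+\varepsilon b+\varepsilon c$, and removing the leftover term $(1-\varepsilon)a$ would require $a\leqslant b$, which is not given. The idea is therefore to \emph{iterate}, converting $a$ into $b$ in chunks of size $\varepsilon$ while pinning the coefficient of $c$ at $\varepsilon$. Concretely, for integers $k$ with $k\varepsilon\leqslant1$ set $E_k=(1-k\varepsilon)a+k\varepsilon\,b+\varepsilon c$; applying the move at $E_k$ with $\lambda=\varepsilon$ and $w=(1-(k+1)\varepsilon)a+k\varepsilon\,b$ (nonnegative coefficients precisely while $(k+1)\varepsilon\leqslant1$) produces $E_k\leqslant E_{k+1}$. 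The point that makes chunk size $\varepsilon$ admissible is the invariant that $E_k$ always carries exactly $\varepsilon$ units of $c$, so there is always enough $c$ to assemble the block $\lambda(a+c)=\varepsilon(a+c)$. Chaining $E_0=a+\varepsilon c\leqslant E_1\leqslant\cdots$ by transitivity and finishing with one partial step of size $1-\lfloor1/\varepsilon\rfloor\varepsilon<\varepsilon$ lands exactly on $b+\varepsilon c$, after at most $\lceil1/\varepsilon\rceil$ steps.

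I expect the only delicate bookkeeping to be verifying that all coefficients remain nonnegative along the chain and that the fractional last step reaches coefficient $1$ on $b$ exactly; both are routine once the invariant ``coefficient of $c$ equals $\varepsilon$, and the coefficients of $a$ and $b$ sum to $1$'' is isolated. It is worth noting that in a cone satisfying cancellation (equivalently, one embeddable in a real vector space) the statement is trivial, so any proof must confine itself to the operations that remain valid without cancellation; this is exactly why the argument is forced to iterate rather than simply subtract $(1-\varepsilon)c$.
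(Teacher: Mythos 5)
Your proof is correct. Note first that the paper itself gives no proof of this proposition --- it is quoted from \cite{KR} --- so the only meaningful comparison is with the standard argument in that source. Your chain is sound: the case $\varepsilon\geqslant 1$ is the one-line addition of $(\varepsilon-1)c$; for $0<\varepsilon<1$ the inequality $E_k\leqslant E_{k+1}$ does follow from multiplying the hypothesis by $\varepsilon$ and adding $w=(1-(k+1)\varepsilon)a+k\varepsilon b$ (admissible exactly while $(k+1)\varepsilon\leqslant 1$, so the chain runs up to $k=\lfloor 1/\varepsilon\rfloor$), and the final partial step with $\lambda=\delta:=1-\lfloor 1/\varepsilon\rfloor\varepsilon$ and $w=\lfloor 1/\varepsilon\rfloor\varepsilon\, b+(\varepsilon-\delta)c$ lands exactly on $b+\varepsilon c$; every step uses only the cone axioms (distributivity, $0b=0$) and the two compatibility properties, as required. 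The argument in \cite{KR} is the same convexity move organized more economically: from $a+c\leqslant b+c$ one gets $a+\tfrac{1}{2}c=\tfrac{1}{2}(a+c)+\tfrac{1}{2}a\leqslant\tfrac{1}{2}(b+c)+\tfrac{1}{2}a=\tfrac{1}{2}(a+c)+\tfrac{1}{2}b\leqslant\tfrac{1}{2}(b+c)+\tfrac{1}{2}b=b+\tfrac{1}{2}c$, hence by iteration $a+2^{-n}c\leqslant b+2^{-n}c$ for all $n$, and for arbitrary $\varepsilon>0$ one chooses $2^{-n}\leqslant\varepsilon$ and adds $(\varepsilon-2^{-n})c$ to both sides. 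The halving scheme exploits the fact that passing from a smaller scalar to a larger one is free (your $\varepsilon\geqslant 1$ observation, used once at the end), and thereby avoids all floor/remainder bookkeeping; your scheme instead hits $\varepsilon$ exactly in at most $\lceil 1/\varepsilon\rceil$ steps. They are minor variants of one another. One caveat on your closing remark: you conflate algebraic cancellation ($a+c=b+c\Rightarrow a=b$, which is equivalent to embeddability in a vector space) with order cancellation ($a+c\leqslant b+c\Rightarrow a\leqslant b$); only the latter trivializes the statement, and it can fail even in embeddable cones (e.g.\ $\Bbb{R}_+$ with the compatible preorder $x\leqslant y$ iff $x=y$ or $x>0$). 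This imprecision is tangential and does not affect your proof.
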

A subset $E$ of a preordered cone $(\K,\leqslant)$  is called decreasing (or increasing) if, for $b\in\K$ and some $a\in E$,  the condition  $b\leqslant a$ (or $a\leqslant b$)
implies that $b\in E$. Unions and intersections of arbitrary families of decreasing
(or increasing) sets are again decreasing (or increasing).  Furthermore, the complement $\K\setminus E$
of a decreasing subset $E$ of $\K$ is increasing. To see why, suppose $b\leqslant c$  for $c\in\K$ and
$b\in\K\setminus E$, if $c\notin\K\setminus E$, then $c\in E$, and since $E$ is decreasing, we get $b\in E$, which contradicts $b\in\K\setminus E$. A similar argument shows that the complement  $\K\setminus E$ of an increasing set $E\subseteq \K$ is decreasing.
\par
An element $\nu\in\K$ is called an upper bound (or lower bound) for a subset
$E$  of a preordered cone $(\K,\leqslant)$ if  $a\leqslant\nu$ (or $\nu\leqslant a$)
for all $a\in E$.  If such a bound $\nu$ belongs to $E$, it is referred to as the greatest element (or smallest element)  of $E$. However, since the preorder relation is not required to be antisymmetric, $E$ may contain multiple greatest or smallest elements, meaning uniqueness is not guaranteed.
\par
A subset $E$ of a preordered cone $(\K,\leqslant)$ is said to be order convex if, for any
$c\in\K$ and $a, b \in E$, the condition
$a\leqslant c\leqslant b$  implies that
$c\in E$. When an order convex set is multiplied by a positive scalar, the resulting set remains order convex. Additionally, the intersection of any collection of order convex sets is also order convex.
\par
A subset $\mathcal{V}$ of a preordered cone $\K$ is referred to as an \textit{abstract 0-neighborhood system} if it satisfies the following conditions:
\begin{itemize}
  \item [$(i)$] $0 < v$ \text{ for all } $v \in \mathcal{V}$;
  \item [$(ii)$] For any $u, v \in \mathcal{V}$,  there exists an element  $w \in \mathcal{V}$ such that
  $w \leqslant u$ \text{ and } $w\leqslant v$;
  \item [$(iii)$] $u + v \in \mathcal{V}$ \text{ and } $\lambda v \in \mathcal{V}$ \text{ whenever } $u, v \in \mathcal{V}$ \text{ and } $\lambda > 0$.
\end{itemize}
The elements
$v$ of $\V$ define upper and lower neighborhoods for the elements of $\K$ as follows:
\begin{align*}
  v(a) &= \{b \in\K_1:~ b \leqslant a + v\}\quad (\text{\rm upper neighborhood}), \\
 (a)v &= \{b \in\K_1:~   a \leqslant b + v\}\quad (\text{\rm lower neighborhood}).
\end{align*}
The intersection $v(a)v:= v(a)\cap(a)v$ defines the symmetric neighborhood of $a$. These neighborhoods generate the upper, lower, and symmetric topologies on $\K$, respectively.
All upper neighborhoods $v(a)$ are decreasing convex subsets of $\K$ and all
lower neighborhoods $(a)v$ are increasing convex. The symmetric neighborhoods
$v(a)v$ are both convex and order convex. This conclusion arises because
$v(a)v$ is the intersection of a decreasing set and an increasing set. We also observe that the following relationships hold for all
$a,b\in\K$  and $\lambda>0$:
\begin{enumerate}
  \item $\lambda v(a)=(\lambda v)(\lambda a)$,
  \item $\lambda (a)v=(\lambda a)(\lambda v)$,
  \item $\lambda v(a)v=(\lambda v)(\lambda a)(\lambda v)$,
  \item $v(a)+b\subseteq v(a+b)$,
  \item $(a)v+b\subseteq (a+b)v$,
   \item $v(a)v+b\subseteq v(a+b)v$.
\end{enumerate}
Each of the three topologies on
$\K$ can rightly be described as locally convex. In the upper topology, any open subset of
$\K$ is decreasing, as it is formed by unions of neighborhoods of its elements. Similarly, in the lower topology, all open subsets of
$\K$ are increasing. Consequently, the complements of these open sets exhibit the opposite behavior: subsets closed in the upper topology are increasing, while those closed in the lower topology are decreasing. This duality arises from the relationship between open and closed sets in these topologies.
\par
Let $(\K,\leqslant)$ be a preordered cone, and let $\V\subseteq\K$ be an abstract $0$-neighborhood system. The pair $(\K,\V)$ is referred to as a \textit{full locally convex cone}. Due to the inherent asymmetry of cones, asymmetric conditions naturally arise. For technical reasons, we require that the elements of a full locally convex cone  $(\K,\V)$ are bounded below.  This means that for every $a \in\K $ and $v \in \V$, there exists $\rho > 0$ such that $0 \leqslant a + \rho v$. Therefore,  a full locally convex cone $(\K,\V,\leqslant)$  is a preordered cone $(\K,\leqslant)$  that contains an $0$- neighborhood system $\V$ such that all its elements are bounded below.
Any subcone of a full locally convex cone $(\K,\V)$, even if it does not include the abstract $0$-neighborhood system $\V$, is termed a \textit{locally convex cone}.
 An element $a \in\K$ is called \textit{upper bounded} if, for every $v \in \V$,  there exists a positive scalar $\lambda > 0$ such that $a \leqslant \lambda v$. The element $a\in\K$ is said to be \textit{bounded} if it is both lower and upper bounded.  In a full locally convex cone the bounded elements therefore coincide
with the upper bounded ones.
 By defining $\xi = \{\varepsilon > 0 :~\varepsilon \in \Bbb{R}\}$, the pairs $(\Bbb{\overline{R}}, \xi)$ and $(\Bbb{\overline{R}}_+, \xi)$ form examples of full locally convex cones.
 \par
 Let $(\K , \V)$ be a locally convex cone, and let  $a \in\K_1$. The closure of $a$ is defined as the intersection of all its upper neighborhoods, that is,
\[
\overline{a}: = \cap\{v(a):~ v \in \V\}.
\]
It is straightforward to verify that $\bar{a}$ represents the closure of the singleton set $\{a\}$ under the lower topology \cite[Corollary I.3.5]{KR}.
A locally convex cone  $(\K , \V)$ is said to be \textit{separated}  if, for any $a,b\in\K$, the equality $\overline{a}=\overline{b}$
  implies $a=b$. In other words, distinct elements in $\K$ must have distinct closures. The locally convex cone $(\K , \V)$ is separated if
and only if the symmetric topology on $\K$ is Hausdorff (see \cite[Proposition I.3.9]{KR}).
\par
For a net $(a_i)_{i \in I}$ in $(\K,\V)$, we say that it converges to an element $a \in\K_1$  with respect to the symmetric relative topology of $\K$ if,
 for every $v\in\V$, there exists an index $i_0 \in I$ such that $a_i\leqslant a+ v$ for all $i \geqslant i_0$. The net $(a_i)_{i \in I}$ is called a (symmetric) \textit{Cauchy net} if for every $v \in \V$, there exists an index $i_0 \in I$ such that $a_i \leqslant a_j+ v$ for all $i,j \geqslant i_0$. Clearly, if a net converges, it must also be a Cauchy net. A locally convex cone $(\mathscr{P}, \mathcal{V})$ is said to be (symmetric)  \textit{complete} if every (symmetric) Cauchy net in $(\K,\V)$ converges to an element in $\K$.
A locally convex cone $(\K, \V)$ is called a \textit{uc-cone} if $\V = \{\lambda w : \lambda > 0\}$ for some $w \in \V$. In this case, the element $w$ is referred to as the \textit{generating element} of $\mathcal{V}$. If $(\K, \V)$ is a uc-cone and $\K$ is simultaneously a real vector space,
then $\K$ becomes a seminormed space under the symmetric topology of $(\K, \V)$. When $\V = \{\lambda w : \lambda > 0\}$, the seminorm $q:\K \to [0, +\infty]$ is defined as:
\[
q(a) = \inf\{\mu > 0 : \mu^{-1} a \in w(0)w\}, \quad a \in\K.
\]
If the symmetric topology on $\K$ is Hausdorff, then $q$ becomes a norm on $\K$ (see \cite{AR} for further details).
\par
The study of functional equations and their stability has been a central topic in mathematical analysis for several decades. Among these, the quadratic functional equation, which arises naturally in various areas of mathematics and physics, has garnered significant attention.
\par
Let $\K$ and $\mathscr{Q}$ be cones. A function $f:\K\to \mathscr{Q}$ is referred to as a quadratic mapping if it satisfies
\[f(x+y)+f(x-y)=2f(x)+2f(y)\]
for all $x,y\in\K$ such that $x-y\in\K$.  The study of functional equations has been a fundamental area of mathematical research, with applications in various fields such as physics, engineering, and economics. Among these, the quadratic functional equation holds significant importance due to its connection with quadratic forms, inner product spaces, and orthogonal additivity. The stability of functional equations, particularly in the sense of Hyers-Ulam, has been a major topic of investigation since the mid-20th century, leading to profound developments in both pure and applied mathematics. The quadratic functional equation  was first systematically studied by Jordan and von Neumann \cite{JV} in the context of characterizing inner product spaces. They proved that a normed space satisfies the parallelogram law (and hence is an inner product space) if and only if its norm satisfies the quadratic functional equation. This established a deep link between quadratic functions and geometric structures.
\par
The stability problem for functional equations originated from a question posed by  Ulam \cite{Ulam} in 1940, concerning the approximation of homomorphisms. Specifically, Ulam asked: If a function approximately satisfies a functional equation, can it be approximated by an exact solution? In 1941,  Hyers \cite{Hyers} provided a partial answer for additive Cauchy equations in Banach spaces, marking the birth of Hyers-Ulam stability theory.
\begin{thm}
Let $X $ be a normed space,  $Y $ a Banach space, and $ f : X \to Y $ a mapping satisfying
\[
\|f(x + y) - f(x) - f(y)\| \leqslant \varepsilon
\]
for some constant $ \varepsilon > 0 $ and for all $ x, y \in X $. Then  the limit
\[
A(x) = \lim_{n\to\infty}\frac{f(2^n x)}{ 2^{n}}
\]
exists for each $x \in X $, and  $ A: X \to Y $ is the unique additive function satisfying
\[
\|f(x) - A(x)\| \leqslant \varepsilon
\]
for all $ x \in X$.
\end{thm}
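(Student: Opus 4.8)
The plan is to follow the direct iteration method introduced by Hyers: build the candidate additive map as the limit of the normalized iterates $f(2^n x)/2^n$, show this sequence is Cauchy, pass to the limit using completeness of $Y$, and then verify additivity, the error bound, and uniqueness in turn.

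First I would set $y = x$ in the defining inequality to obtain $\|f(2x) - 2f(x)\| \leqslant \varepsilon$, equivalently $\|f(2x)/2 - f(x)\| \leqslant \varepsilon/2$. Replacing $x$ by $2^k x$ and dividing by $2^k$ gives the key one-step estimate $\|f(2^{k+1}x)/2^{k+1} - f(2^k x)/2^k\| \leqslant \varepsilon/2^{k+1}$. Summing these over a telescoping range yields, for $n > m$,
\[
\left\| \frac{f(2^n x)}{2^n} - \frac{f(2^m x)}{2^m} \right\| \leqslant \sum_{k=m}^{n-1} \frac{\varepsilon}{2^{k+1}} \leqslant \frac{\varepsilon}{2^m},
\]
so the sequence $(f(2^n x)/2^n)_n$ is Cauchy in $Y$ for each fixed $x$. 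Since $Y$ is a Banach space, the limit $A(x) = \lim_{n\to\infty} f(2^n x)/2^n$ exists for every $x \in X$.

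Next I would check that $A$ is additive. Applying the hypothesis at the points $2^n x$ and $2^n y$ gives $\|f(2^n(x+y)) - f(2^n x) - f(2^n y)\| \leqslant \varepsilon$; dividing by $2^n$ and letting $n \to \infty$ forces $\|A(x+y) - A(x) - A(y)\| \leqslant \lim_{n\to\infty} \varepsilon/2^n = 0$, whence $A(x+y) = A(x) + A(y)$. Setting $m = 0$ in the telescoping estimate and letting $n \to \infty$ delivers the error bound $\|f(x) - A(x)\| \leqslant \varepsilon$.

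For uniqueness, suppose $A'$ is another additive map with $\|f(x) - A'(x)\| \leqslant \varepsilon$. Additivity gives $A(2^n x) = 2^n A(x)$ and $A'(2^n x) = 2^n A'(x)$, so
\[
\|A(x) - A'(x)\| = \frac{1}{2^n}\|A(2^n x) - A'(2^n x)\| \leqslant \frac{1}{2^n}\big(\|A(2^n x) - f(2^n x)\| + \|f(2^n x) - A'(2^n x)\|\big) \leqslant \frac{2\varepsilon}{2^n},
\]
which tends to $0$, forcing $A = A'$. The only genuinely delicate point is securing the Cauchy estimate with a bound uniform enough to survive the limit; once the telescoping inequality is in hand, the remaining steps reduce to the triangle inequality and the continuity of the norm.
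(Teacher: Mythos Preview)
Your argument is correct and is precisely the classical Hyers iteration: the one-step estimate from $y=x$, the telescoping Cauchy bound, completeness of $Y$ to define $A$, passage to the limit for additivity and the error estimate, and the standard $2^n$-scaling for uniqueness. The paper itself does not supply a proof of this theorem; it merely quotes it as Hyers' 1941 result in the introduction, so there is no in-paper argument to compare against. Your proof is exactly the approach attributed to Hyers and matches the method the authors later adapt (with $4^n$ in place of $2^n$) for their quadratic analogue.
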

The stability theory of functional equations has witnessed significant developments through several key contributions.
In 1950 Aoki \cite{Aoki} extended Hyers' theorem to additive mappings, marking an important generalization. Two decades later, Th.M. Rassias \cite{Ras78} achieved a crucial advancement by removing the boundedness restriction on the Cauchy difference for linear mappings. This work was further refined by  G\u{a}vru\c{t}a \cite{Gav} who introduced general control functions to bound the Cauchy difference. These theoretical breakthroughs have successfully extended the stability concept to various other functional equations.
The stability of the quadratic functional equation was first investigated by Skof \cite{Skof83} and Cholewa \cite{Ch84}, who proved that if a function
$f$ from an abelian group $(G,+)$ to a Banach space $Y$ satisfies the inequality
\[\|f(x+y)+f(x-y)-2f(x)-2f(y)\|\leqslant\varepsilon\]
for some $\varepsilon\geqslant0$ and for all $x,y\in G$, then there exists a unique quadratic function
$Q:G\to Y$ such that $\|f(x)-Q(x)\|\leqslant\frac{\varepsilon}{2}$ for all $x\in G$.
 \par
In this work, we investigate the stability of quadratic mappings in the setting of locally convex cones. These structures generalize locally convex topological vector spaces by incorporating order-theoretic and convexity properties. Specifically, a locally convex cone is equipped with a preorder and a system of convex neighborhoods, which allows for a natural extension of classical functional-analytic results to ordered convex structures. The study of quadratic operators in such cones not only extends the classical theory but also provides new insights into the behavior of these operators in ordered and convex structures.
\par
The primary motivation for this work stems from the need to understand the stability properties of quadratic operators in non-linear and non-Archimedean settings. While the stability of quadratic mappings in Banach spaces and normed spaces has been extensively studied, the extension of these results to locally convex cones remains relatively unexplored. This paper aims to fill this gap by establishing Hyers-Ulam type stability results for quadratic operators in locally convex cones. Our approach leverages the structure of these cones, particularly their symmetric topology and the properties of bounded elements, to derive stability theorems.
\par
Our results generalize and extend previous work on the stability of quadratic mappings, providing a unified framework for studying these operators in locally convex cones. The techniques employed in this paper are rooted in the theory of locally convex cones and functional analysis, and they offer a new perspective on the stability of quadratic operators in ordered and convex structures.
\section{Main Results}
The following lemmas have  essential roles in our main results.
\begin{lem}\label{Lem.ab}
Let $ (\K, \V) $ be a separated locally convex cone, and let $a,b\in\K$. If $a\leqslant b+v$ and $b\leqslant a+v$ hold for every $v\in\V$, then $a=b$.
\end{lem}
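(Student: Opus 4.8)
The plan is to reduce the claim to the definition of separatedness, which asserts that $\overline{a}=\overline{b}$ forces $a=b$. Recall that $\overline{a}=\cap\{v(a):v\in\V\}$ and that, by the definition of the upper neighborhood, $a\in v(b)$ holds precisely when $a\leqslant b+v$. Thus the hypothesis $a\leqslant b+v$ for every $v\in\V$ says exactly that $a\in\overline{b}$, and symmetrically $b\leqslant a+v$ for every $v\in\V$ says that $b\in\overline{a}$. The problem therefore becomes: from $a\in\overline{b}$ and $b\in\overline{a}$, deduce $\overline{a}=\overline{b}$, and then invoke separatedness.

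First I would establish the inclusion $\overline{a}\subseteq\overline{b}$. Take an arbitrary $c\in\overline{a}$, so that $c\leqslant a+u$ for all $u\in\V$, and fix $v\in\V$. The key maneuver is a halving trick: by property $(iii)$ of the $0$-neighborhood system the element $w:=\tfrac12 v$ again lies in $\V$, and $2w=v$ by the scalar-multiplication axioms. Applying the relation $c\leqslant a+u$ and the hypothesis $a\leqslant b+u$ at the particular value $u=w$ gives $c\leqslant a+w$ and $a\leqslant b+w$; adding $w$ to the latter (using compatibility of $\leqslant$ with addition) and chaining through transitivity yields $c\leqslant a+w\leqslant b+w+w=b+2w=b+v$. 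Since $v$ was arbitrary, $c\in\overline{b}$, whence $\overline{a}\subseteq\overline{b}$. Interchanging the roles of $a$ and $b$ gives $\overline{b}\subseteq\overline{a}$, so $\overline{a}=\overline{b}$.

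Finally, since $(\K,\V)$ is separated, $\overline{a}=\overline{b}$ implies $a=b$, which completes the argument.

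I expect the only genuine obstacle to be the halving step: one must convert the single-$v$ hypotheses into a chain that lands back inside $b+v$ without having an additive inverse or a cancellation law at one's disposal, and this is exactly where closure under $\lambda v\in\V$ for $\lambda>0$ (property $(iii)$) together with the compatibility of $\leqslant$ with addition are essential. Alternatively, one can bypass the explicit computation by appealing to the cited identification of $\overline{a}$ with the lower-topology closure of $\{a\}$: then $a\in\overline{b}$ forces the smallest closed set containing $a$ to sit inside $\overline{b}$, i.e.\ $\overline{a}\subseteq\overline{b}$, and symmetry finishes it. I would keep the direct inequality argument as the primary line, since it stays self-contained within the algebraic framework already established.
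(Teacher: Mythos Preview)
Your proof is correct and follows essentially the same approach as the paper: both arguments use the halving trick $w=\tfrac12 v\in\V$ to chain $c\leqslant a+w\leqslant b+2w=b+v$, thereby establishing $\overline{a}=\overline{b}$ and invoking separatedness. The only cosmetic difference is that you spell out the justification via property~$(iii)$ and compatibility of the preorder, and you mention the alternative topological-closure viewpoint, but the core argument is identical.
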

\begin{proof}
  To prove $a=b$, it suffices to show that $\overline{a}=\overline{b}$. Let $x\in\overline{a}$. Then $x\leqslant a+\frac{1}{2}v$ for all $v\in\V$. Since $a\leqslant b+\frac{1}{2}v$, it follows that $x\leqslant b+v$ for all $v\in\V$,  which implies $x\in\overline{b}$. This demonstrates that $\overline{a}\subseteq\overline{b}$. By a similar argument, we can show that $\overline{b}\subseteq\overline{a}$, leading to $\overline{a}=\overline{b}$. Because $ (\K , \V) $ is a separated locally convex cone, we conclude that $a=b$.
\end{proof}
\begin{cor}
Let $ (\K , \V) $ be a separated locally convex cone, and let $a,b\in\K$. If $a+v= b+v$  for every $v\in\V$, then $a=b$.
\end{cor}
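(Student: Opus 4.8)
The plan is to deduce this corollary directly from Lemma \ref{Lem.ab}. That lemma concludes $a=b$ from the two one-sided estimates $a\leqslant b+v$ and $b\leqslant a+v$ holding for every $v\in\V$, so it suffices to extract these two inequalities from the single equation $a+v=b+v$.

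First I would record the elementary fact that $a\leqslant a+v$ for every $a\in\K$ and every $v\in\V$. Indeed, condition $(i)$ in the definition of an abstract $0$-neighborhood system gives $0\leqslant v$, and the compatibility of the preorder with addition then yields $a=a+0\leqslant a+v$ (using commutativity and the neutral element axiom $a+0=a$). The same reasoning gives $b\leqslant b+v$.

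Next I would substitute the hypothesis. Fix $v\in\V$. From $a\leqslant a+v$ together with $a+v=b+v$ we obtain $a\leqslant b+v$; symmetrically, from $b\leqslant b+v=a+v$ we obtain $b\leqslant a+v$. Since $v\in\V$ was arbitrary, both inequalities hold for all $v\in\V$, and Lemma \ref{Lem.ab} immediately forces $a=b$.

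Because the argument is essentially a reduction, I do not anticipate a genuine obstacle. The only point requiring care is that one cannot simply cancel $v$ from $a+v=b+v$ to obtain $a=b$ outright, since the preorder on $\K$ is not assumed antisymmetric and the cancellation law may fail in general cones; the separatedness hypothesis, routed through the closures as in Lemma \ref{Lem.ab}, is precisely what repairs this and legitimizes the passage from the two approximate inequalities to genuine equality.
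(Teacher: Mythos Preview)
Your reduction to Lemma \ref{Lem.ab} is exactly the intended argument: the paper states this result as an immediate corollary of that lemma and gives no separate proof. Your extraction of the two one-sided inequalities from $a+v=b+v$ via $a\leqslant a+v=b+v$ (using $0\leqslant v$ and compatibility of the preorder with addition) is correct, and your closing remark about why direct cancellation is unavailable is apt.
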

\begin{lem}\cite{NR1}\label{Lem.0}
Let $ (\K , \V) $ be a locally convex cone, and let $ a \in \K  $. Suppose $ \{\lambda_n\} $ is a sequence of non-negative scalars converging to $0$ as $ n \to \infty $. Then $a$ is bounded if and only if $ \lambda_n a $ converges to $0$ as $ n \to \infty $ in the symmetric topology.
\end{lem}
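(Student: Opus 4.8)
The plan is to unwind the definition of symmetric convergence and translate both boundedness and the convergence $\lambda_n a \to 0$ into the one-sided inequalities they encode. Recall that $\lambda_n a \to 0$ in the symmetric topology means that for each $v \in \V$ there is an index $n_0$ with $\lambda_n a \leqslant v$ \emph{and} $0 \leqslant \lambda_n a + v$ for all $n \geqslant n_0$; meanwhile $a$ being bounded means $a$ is upper bounded (for each $v$ some $\mu > 0$ with $a \leqslant \mu v$) and lower bounded (for each $v$ some $\rho > 0$ with $0 \leqslant a + \rho v$). So the two directions of the equivalence will match the upper part of convergence with upper boundedness and the lower part with lower boundedness.

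Before either direction I would record the elementary scaling fact that for $v \in \V$ and $0 \leqslant t \leqslant 1$ one has $t v \leqslant v$. This follows because $0 < v$ gives $0 \leqslant v$, whence $0 = (1-t)0 \leqslant (1-t) v$ by compatibility of scalar multiplication with the order; adding $tv$ and using the distributive law $t v + (1-t) v = v$ then yields $t v \leqslant t v + (1-t)v = v$. This small fact is the engine that lets me absorb the scalars $\lambda_n$, which are eventually $\leqslant 1$, into a single neighborhood $v$.

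For the direction ``$a$ bounded $\Rightarrow \lambda_n a \to 0$'', fix $v \in \V$. Upper boundedness gives $\mu > 0$ with $a \leqslant \mu v$, so $\lambda_n a \leqslant (\lambda_n \mu) v$; once $n$ is large enough that $\lambda_n \mu \leqslant 1$, the scaling fact gives $\lambda_n a \leqslant v$. Lower boundedness gives $\rho > 0$ with $0 \leqslant a + \rho v$, so $0 = \lambda_n 0 \leqslant \lambda_n a + (\lambda_n \rho) v$; for $n$ large with $\lambda_n \rho \leqslant 1$ this upgrades, via $\lambda_n \rho v \leqslant v$ and transitivity, to $0 \leqslant \lambda_n a + v$. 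Both parts hold simultaneously for all large $n$, which is exactly symmetric convergence to $0$.

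For the converse, fix $v \in \V$ and, using $\lambda_n a \to 0$, choose a large index $n$ with $\lambda_n > 0$ such that $\lambda_n a \leqslant v$ and $0 \leqslant \lambda_n a + v$. Multiplying the first inequality by $\lambda_n^{-1}$ gives $a \leqslant \lambda_n^{-1} v$, and multiplying the second by $\lambda_n^{-1}$ (using $\lambda(a+b) = \lambda a + \lambda b$ and $\lambda_n^{-1} 0 = 0$) gives $0 \leqslant a + \lambda_n^{-1} v$. As $v$ was arbitrary, $a$ is upper and lower bounded, hence bounded. The main obstacle, and the point I would flag explicitly, is the need for $\lambda_n > 0$ in this converse step: if the $\lambda_n$ were allowed to vanish eventually then $\lambda_n a = 0$ converges trivially for every $a$ and the conclusion would fail, so the argument genuinely uses that infinitely many $\lambda_n$ are strictly positive (which I would take as the intended reading of the hypothesis). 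Everything else is routine cone bookkeeping.
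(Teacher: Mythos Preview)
The paper does not supply a proof of this lemma; it is quoted from \cite{NR1} and used as a black box. So there is no ``paper's own proof'' to compare against, and your argument is the only one on the table.

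Your proof is correct. A couple of remarks. First, in this paper's setting every element of a locally convex cone is already lower bounded (this is built into the definition of a full locally convex cone, and subcones inherit it; the paper says explicitly that bounded elements coincide with upper bounded ones). Hence the lower-boundedness half of each direction is automatic, and the real content is the match between upper boundedness of $a$ and the inequality $\lambda_n a \leqslant v$ for large $n$. Your treatment of the lower part is not wrong, just redundant here. Second, your caveat about needing infinitely many $\lambda_n>0$ is exactly right and is a genuine issue with the statement as written: if the sequence is eventually zero the converse implication is false. The intended use in the paper is always with $\lambda_n = 1/2^n$ or $\lambda_n = 1/4^n$, so the difficulty does not arise in practice, but it is good that you flagged it.
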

\begin{prop}
Consider a cone $\mathcal{Q}$ and a locally convex cone $ (\K , \V) $. Suppose a function $f:\mathcal{Q}\to\K$ satisfies
\begin{equation}\label{qq}
   2f\left(\frac{x+y}{2}\right) + 2f\left(\frac{x-y}{2}\right) =f(x) + f(y),\quad x,y,x-y\in \mathcal{Q}
\end{equation}
and $f(0)$ is bounded. Then $f$ is a quadratic mapping.
\end{prop}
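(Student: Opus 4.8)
The plan is to deduce the quadratic identity $f(x+y)+f(x-y)=2f(x)+2f(y)$ from the hypothesis \eqref{qq} by a single substitution, while treating the value $f(0)$ separately, since the origin is where the absence of additive cancellation in a cone becomes relevant. The engine of the argument is to apply \eqref{qq} not to $(x,y)$ but to the pair $(x+y,\,x-y)$. For fixed $x,y\in\mathcal{Q}$ with $x-y\in\mathcal{Q}$, the elements $x+y$ and $x-y$ both lie in $\mathcal{Q}$ (the first because $\mathcal{Q}$ is closed under addition, the second by hypothesis), and I would check that their difference is admissible as well: writing $x=y+(x-y)$ and using distributivity gives $x+y=2y+(x-y)$, which exhibits $2y$ as the difference $(x+y)-(x-y)$, and $2y\in\mathcal{Q}$ since $\mathcal{Q}$ is closed under scalar multiplication. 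Thus \eqref{qq} may legitimately be evaluated at the pair $(x+y,\,x-y)$.

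Next I would carry out the two cone-arithmetic simplifications needed inside \eqref{qq}. The same identity $x=y+(x-y)$ yields $(x+y)+(x-y)=2y+2(x-y)=2x$ and $(x+y)-(x-y)=2y$, so that $\tfrac{(x+y)+(x-y)}{2}=x$ and $\tfrac{(x+y)-(x-y)}{2}=y$; crucially these rest only on associativity, commutativity and the distributive laws, never on cancellation. Substituting into \eqref{qq} collapses its left-hand side to $2f(x)+2f(y)$ and its right-hand side to $f(x+y)+f(x-y)$, which is exactly the quadratic equation for the pair $(x,y)$. Since $(x,y)$ was an arbitrary admissible pair (the degenerate case $y=0$ being included, where the identity reduces to $2f(x)+2f(0)=2f(x)$), this establishes that $f$ is a quadratic mapping.

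Finally I would record the behaviour at the origin, and this is the only place the boundedness of $f(0)$ enters. Putting $x=y$ in \eqref{qq} gives $2f(x)+2f(0)=2f(x)$, while $x=y=0$ gives $4f(0)=2f(0)$; scaling the latter by $\tfrac12$ yields $2f(0)=f(0)$ and hence $2^{-n}f(0)=f(0)$ for every $n$. In a vector space one would cancel to conclude $f(0)=0$ at once, but a cone has no additive inverses, so instead I would invoke Lemma \ref{Lem.0}: because $f(0)$ is bounded and $2^{-n}\to0$, the sequence $2^{-n}f(0)$ converges to $0$ in the symmetric topology, forcing $f(0)$ into the closure of $0$ (and $f(0)=0$ precisely when $(\K,\V)$ is separated). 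I expect this normalization of $f(0)$ to be the main technical point, precisely because the usual ``subtract from both sides'' manoeuvre is unavailable and must be replaced by the topological scaling argument; the non-cancellativity of $\mathcal{Q}$ is in fact what one has to keep watching throughout, even in justifying the substitution above.
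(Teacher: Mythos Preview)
Your argument is correct and takes a genuinely different route from the paper's. The paper first establishes $f(0)=0$ (from $4f(0)=2f(0)$, hence $f(0)=2^{-n}f(0)$, then invokes Lemma~\ref{Lem.0} on the bounded element $f(0)$), next sets $y=0$ in \eqref{qq} to obtain $4f(x/2)=f(x)$, and finally feeds this back into \eqref{qq} to produce $f(x+y)+f(x-y)=2f(x)+2f(y)$. Your approach bypasses all of this: the single substitution $(X,Y)=(x+y,x-y)$ in \eqref{qq} collapses directly to the quadratic identity, after the cone-arithmetic verifications you carry out. In particular, your first two paragraphs already prove that $f$ is quadratic \emph{without using the boundedness of $f(0)$ at all}, so your argument is actually slightly stronger than what the proposition asserts; the paper's route genuinely needs $f(0)=0$ to pass from $4f(x/2)=f(x)+f(0)$ to $4f(x/2)=f(x)$, which is why the boundedness hypothesis appears there. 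Your third paragraph on $f(0)$ is correct (and your caveat about separatedness is more scrupulous than the paper), but it is not required for the conclusion ``$f$ is quadratic'' as stated. The only delicate point in your approach is the meaning of $(x+y)-(x-y)$ in a cone without cancellation; you handle this correctly by exhibiting $2y$ as a witness, which is consistent with how the paper itself treats the symbol $x-y$ throughout.
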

\begin{proof}
 Substituting $x=y=0$ in \eqref{qq}, we obtain $2f(0)=f(0)$. Therefore, we deduce $f(0)=\frac{1}{2^n}f(0)$ for all $n\in\Bbb{N}$. Since $f(0)$ is bounded, by Lemma \ref{Lem.0}, we get $f(0)=0$. Next, setting $y=0$ in  \eqref{qq} and using $f(0)=0$, we derive  $4f\left(\frac{x}{2}\right)=f(x)$ for all $x\in \mathcal{Q}$. Thus,  \eqref{qq} yields
  \[f(x+y)+f(x-y)=2f(x)+2f(y),\quad x,y,x-y\in \mathcal{Q}.\]
\end{proof}
Now, we present the following key results:
\begin{thm}\label{t1.1}
    Let $(\K_1, \V_1)$ be a locally convex cone and $(\K_2, \V_2)$ a separated full locally convex cone which is complete under the symmetric topology. Assume that a mapping $ f:\K_1\to\K_2 $ satisfies
   \begin{equation}\label{t1.2}
        2f\left(\frac{x+y}{2}\right) + 2f\left(\frac{x-y}{2}\right) \in v(f(x) + f(y))v
    \end{equation}
    for some bounded element $ v \in \V_2 $ and for all $ x, y \in\K_1$ with $x-y\in\K_1$. If  $ f(0) $ is bounded, then there exists a unique quadratic function $ Q:\K_1\to\K_2 $ and a positive real number $ \gamma $ such that
    \[
        Q(x) \in (\gamma v)(f(x))(\gamma v),\quad  x \in\K_1.
    \]
\end{thm}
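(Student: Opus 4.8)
The plan is to run Hyers's direct method adapted to a setting with no subtraction: throughout I will replace every ``error $\le\varepsilon$'' estimate by the pair of one-sided inequalities encoded in the membership $\in v(\cdot)v$, carry all error terms as \emph{bounded} elements, and only at the very end convert two-sided estimates into equalities via Lemma~\ref{Lem.ab}. Concretely, I first unfold \eqref{t1.2} into $2f(\frac{x+y}{2})+2f(\frac{x-y}{2})\le f(x)+f(y)+v$ together with its reverse $f(x)+f(y)\le 2f(\frac{x+y}{2})+2f(\frac{x-y}{2})+v$. Setting $y=0$ and replacing $x$ by $2x$ yields the approximate doubling relations $4f(x)\le f(2x)+f(0)+v$ and $f(2x)+f(0)\le 4f(x)+v$.

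Put $g_n(x):=\frac{1}{4^n}f(2^n x)$. Evaluating the first relation at $2^n x$ and dividing by $4^{n+1}$ gives $g_n(x)\le g_{n+1}(x)+\frac{1}{4^{n+1}}(f(0)+v)$, which telescopes cleanly to $g_n(x)\le g_m(x)+\frac{1}{3\cdot4^n}(f(0)+v)$ for $m>n$. The reverse direction is the one delicate point, since the second relation carries $f(0)$ on the wrong side and there is no cancellation to remove it. Here I would exploit that $f(0)$ is bounded, hence lower bounded, to fix $\rho>0$ with $0\le f(0)+\rho v$, so that $f(2x)\le f(2x)+f(0)+\rho v\le 4f(x)+(1+\rho)v$; this purges $f(0)$ and telescopes to $g_m(x)\le g_n(x)+\frac{1+\rho}{3\cdot4^n}v$. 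Because $f(0)+v$ and $v$ are bounded, Lemma~\ref{Lem.0} lets both error terms be dominated by an arbitrary $w\in\V_2$ once $n$ is large, so $(g_n(x))_n$ is a symmetric Cauchy sequence; by completeness of $\K_2$ it converges to a point I call $Q(x)$, and $Q(0)=0$ since $\frac{1}{4^n}f(0)\to0$.

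Next I would identify $Q$ as quadratic. Replacing $(x,y)$ by $(2^n x,2^n y)$ in the two unfolded inequalities, dividing by $4^n$, and using $\frac{2}{4^n}f\big(2^{n-1}(x\pm y)\big)=\frac12 g_{n-1}(x\pm y)$ gives $\frac12 g_{n-1}(x+y)+\frac12 g_{n-1}(x-y)\le g_n(x)+g_n(y)+\frac{1}{4^n}v$ together with its reverse. Letting $n\to\infty$ (the term $\frac{1}{4^n}v\to0$ by Lemma~\ref{Lem.0}, and sums and scalar multiples of convergent nets converge), a routine $\varepsilon/3$-type absorption yields $\frac12 Q(x+y)+\frac12 Q(x-y)\le Q(x)+Q(y)+w$ and its reverse for every $w\in\V_2$, whence Lemma~\ref{Lem.ab} gives $Q(x+y)+Q(x-y)=2Q(x)+2Q(y)$ for all $x,y$ with $x-y\in\K_1$; thus $Q$ is quadratic. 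For the approximation, taking $n=0$ in the two telescoped estimates and letting $m\to\infty$ produces $Q(x)\le f(x)+\frac{1+\rho}{3}v+w$ and $f(x)\le Q(x)+\frac13 f(0)+\frac13 v+w$ for all $w$; bounding $f(0)\le\lambda v$ by upper boundedness and choosing $w=\varepsilon v$, any $\gamma>\frac13\max\{1+\rho,\,\lambda+1\}$ gives $Q(x)\in(\gamma v)(f(x))(\gamma v)$ uniformly in $x$.

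Finally, for uniqueness let $Q'$ be quadratic with $Q'(x)\in(\gamma' v)(f(x))(\gamma' v)$. Then $Q'(0)$ is bounded (it is upper bounded by $f(0)+\gamma' v$ and lower bounded as an element of the full cone $\K_2$) and satisfies $Q'(0)=\frac{1}{2^n}Q'(0)$, so $Q'(0)=0$ by Lemma~\ref{Lem.0}; hence $Q'(2^n x)=4^n Q'(x)$, and likewise $Q(2^n x)=4^n Q(x)$. Combining $Q(2^n x)\le f(2^n x)+\gamma v$ with $f(2^n x)\le Q'(2^n x)+\gamma' v$ and dividing by $4^n$ gives $Q(x)\le Q'(x)+\frac{\gamma+\gamma'}{4^n}v$; as $v$ is bounded this forces $Q(x)\le Q'(x)+w$ for every $w$, and symmetrically, so $Q=Q'$ by Lemma~\ref{Lem.ab}. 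The main obstacle, and the reason the cone setting is more than cosmetic, is precisely the absence of cancellation: it blocks the usual clean one-step estimate for the upper doubling direction, and the fix is to trade the stray $f(0)$ for a bounded multiple $\rho v$ using lower boundedness, after which boundedness (Lemma~\ref{Lem.0}) and the separation lemma (Lemma~\ref{Lem.ab}) carry out all the limiting work.
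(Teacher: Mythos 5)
Your proposal is correct and takes essentially the same route as the paper: the direct Hyers method run with one-sided inequalities, the stray $f(0)$ traded for a multiple of $v$ via lower/upper boundedness, Lemma~\ref{Lem.0} controlling the error terms, completeness giving the limit $Q$, and Lemma~\ref{Lem.ab} yielding both the quadratic equation and uniqueness (your uniqueness step, which first proves $Q'(0)=0$ from boundedness before invoking $Q'(2^nx)=4^nQ'(x)$, is in fact more careful than the paper's, which uses that scaling identity without justifying $q(0)=0$). One small repair: the claim that $g_n(x)\le g_{n+1}(x)+\frac{1}{4^{n+1}}(f(0)+v)$ ``telescopes cleanly'' to an error $\frac{1}{3\cdot 4^n}(f(0)+v)$ silently uses $f(0)+v\geqslant 0$, which need not hold; either keep the finite partial sum $\bigl(\sum_{k=n+1}^{m}4^{-k}\bigr)(f(0)+v)$ and dominate it by an arbitrary $w\in\V_2$ using upper boundedness of $f(0)+v$, or purge $f(0)$ from this direction too via $f(0)\leqslant\lambda v$ (as the paper does), after which your Cauchy argument goes through verbatim.
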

\begin{proof}
    Let $ v \in \V_2 $ be a bounded element satisfying the condition in \eqref{t1.2}. By setting $ y = 0 $ in \eqref{t1.2}, we obtain
    \[
        4f\left(\frac{x}{2}\right) \in v(f(x) + f(0))v
    \]
    for all $ x \in\K_1$. This implies the inequalities:
    \begin{align}\label{t1.3}
        4f\left(\frac{x}{2}\right) \leqslant f(x) + f(0) + v, \\
        f(x) + f(0) \leqslant 4f\left(\frac{x}{2}\right) + v.
    \end{align}
    Since $ f(0) $ is bounded, there exists a positive real number $ \lambda > 0 $ such that
    \begin{align}\label{t1.4}
        f(0) + \lambda v \geqslant 0 \quad \text{and} \quad f(0) \leqslant \lambda v.
    \end{align}
    Combining \eqref{t1.3} and \eqref{t1.4}, we derive
    \begin{align*}
        4f\left(\frac{x}{2}\right) \leqslant f(x) + (\lambda + 1)v, \\
        f(x) \leqslant 4f\left(\frac{x}{2}\right) + (\lambda + 1)v.
    \end{align*}
    Hence
      \begin{align}
        f(x) \leqslant \frac{f(2x)}{4} + \frac{\lambda + 1}{4}v, \label{t1.5} \\
        \frac{f(2x)}{4} \leqslant f(x)+ \frac{\lambda + 1}{4}v. \label{t1.5+}
    \end{align}
    Using mathematical induction on $n$, we extend the result to the following inequalities:
    \begin{align}
        f(x) \leqslant \frac{1}{4^n} f(2^n x) + \frac{1}{3}\left(1 - \frac{1}{4^n}\right)(\lambda + 1)v, \label{t1.6} \\
        \frac{1}{4^n} f(2^n x) \leqslant f(x) + \frac{1}{3}\left(1 - \frac{1}{4^n}\right)(\lambda + 1)v \label{t1.6+}
    \end{align}
    for all $x\in\K_1$ and all $n\in\Bbb{N}$. To establish \eqref{t1.6}, we first note that the inequality holds for
 $n=1$ as a direct consequence of \eqref{t1.5}.  Next, assume that \eqref{t1.6} is valid for some $n$ and  all $x\in\K_1$. Then
     \begin{align*}
        f(x)& \leqslant \frac{f(2x)}{4} + \frac{\lambda + 1}{4}v\quad \text{(by \eqref{t1.5})} \\
        &\leqslant \frac{1}{4^{n+1}} f(2^{n+1} x) + \frac{1}{12}\left(1 - \frac{1}{4^n}\right)(\lambda + 1)v + \frac{\lambda + 1}{4}v \quad \text{(by \eqref{t1.6})}  \\
        &= \frac{1}{4^{n+1}} f(2^{n+1} x) + \frac{1}{3}\left(1 - \frac{1}{4^{n+1}}\right)(\lambda + 1)v
    \end{align*}
     for all $x\in\K_1$. This completes the proof of \eqref{t1.6} for all $n\in\Bbb{N}$.
     To prove \eqref{t1.6+}, we begin by observing that the inequality holds for
 $n=1$ as a direct result of \eqref{t1.5+}. Now, suppose that \eqref{t1.6+} is true for  some $n$ and  all $x\in\K_1$.  Starting from \eqref{t1.6+}, we derive:
     \begin{align*}
  \frac{1}{4^{n+1}} f(2^{n+1} x) & \leqslant \frac{f(2x)}{4} + \frac{1}{12}\left(1 - \frac{1}{4^n}\right)(\lambda + 1)v\quad \text{(by \eqref{t1.6+})} \\
        &\leqslant  f(x)+ \frac{\lambda + 1}{4}v+\frac{1}{12}\left(1 - \frac{1}{4^n}\right)(\lambda + 1)v \quad \text{(by \eqref{t1.5+})}  \\
        &= f(x) + \frac{1}{3}\left(1 - \frac{1}{4^{n+1}}\right)(\lambda + 1)v
    \end{align*}
     for all $x\in\K_1$. This establishes \eqref{t1.6+} for all  $n\in\Bbb{N}$.
     Replacing $ x $ with $ 2^m x $ in \eqref{t1.6}  and \eqref{t1.6+} and multiplying both sides by $ \frac{1}{4^m} $,  we conclude
   	\begin{align}
				 \frac{1}{4^{m}} f(2^{m}x)\leqslant \frac{1}{4^{n+m}} f(2^{n+m}x)+\frac{\lambda +1}{3 \times 4^{m}}v, \label{t1.7} \\
\frac{1}{4^{n+m}} f(2^{n+m}x)\leqslant\frac{1}{4^{m}} f(2^{m}x)+\frac{\lambda +1}{3 \times 4^{m}}v \label{t1.7+}
			\end{align}
 for all $x\in\K_1$ and all  $m, n\in\Bbb{N}$.
  Given that  $ v $ is bounded,  Lemma \ref{Lem.0} ensures that the sequence $\left\{\frac{\lambda +1}{3 \times 4^{m}}v\right\}_m$ converges to zero as $m\to+\infty$.
  Let $ u \in \V_2 $ an arbitrary element. Then, there exists a natural number $N\in\Bbb{N}$ such that
  \[\frac{\lambda +1}{3 \times 4^{m}}v\leqslant u,\quad m>N.\]
  Using this result together with \eqref{t1.7} and \eqref{t1.7+}, we deduce:
  \begin{align*}
				 \frac{1}{4^{m}} f(2^{m}x)\leqslant \frac{1}{4^{n+m}} f(2^{n+m}x)+u, \quad
\frac{1}{4^{n+m}} f(2^{n+m}x)\leqslant\frac{1}{4^{m}} f(2^{m}x)+u
			\end{align*}
 for all $x\in\K_1$ and all  $m, n\in\Bbb{N}$ with $m>N$.
 This implies that the sequence $ \left\{ \frac{1}{4^n} f(2^n x) \right\}_n $ forms a Cauchy sequence in with respect to the symmetric topology. Because $ (\K_2, \V_2) $ is a separated space, the symmetric topology is Hausdorff, ensuring that the limit of this sequence is uniquely determined. We now define the mapping  $Q:\K_1\to\K_2$ as follows:
    \[
        Q(x) = \lim_{n \to \infty} \frac{1}{4^n} f(2^n x),\quad x\in\K_1.
    \]
    Thus, for $x\in\K_1$,  there is $n\in\Bbb{N}$ such that
    \[\frac{1}{4^n} f(2^n x)\leqslant Q(x)+\frac{v}{3},\quad Q(x)\leqslant \frac{1}{4^n} f(2^n x)+\frac{v}{3}.\]
    Using \eqref{t1.6} and \eqref{t1.6+}, we deduce
   \[ f(x) \leqslant Q(x)+ \frac{\lambda + 2}{3}v,\quad Q(x)\leqslant f(x)+ \frac{\lambda + 2}{3}v.\]
   Hence $Q(x) \in (\gamma v)(f(x))(\gamma v) $, where $ \gamma = \frac{\lambda + 2}{3} $.
We now prove $Q$ is a quadratic mapping on $\K_1$. Let $x,y\in\K_1$ such that $x-y\in\K_1$.
 Replacing $ x $ with $ 2^n x $ and $ y $ with $ 2^n y $ in \eqref{t1.2} and multiplying both sides by $ \frac{1}{4^n} $, we obtain
   \begin{align}
			2\left[\frac{1}{4^{n}}f\left(2^{n}\left( \frac{x+y}{2}\right)\right) + \frac{1}{4^{n}}f\left(2^{n}\left(\frac{x-y}{2}\right)\right) \right] \leqslant\frac{1}{4^{n}} f(2^{n}x)+\frac{1}{4^{n}} f(2^{n}y)+ \frac{1}{4^{n}}v, \label{t1.8} \\
\frac{1}{4^{n}} f(2^{n}x)+\frac{1}{4^{n}} f(2^{n}y)\leqslant	2\left[\frac{1}{4^{n}}f\left(2^{n}\left(\frac{x+y}{2}\right)\right) + \frac{1}{4^{n}}f\left(2^{n}\left(\frac{x-y}{2}\right)\right) \right] +\frac{1}{4^{n}}v \label{t1.8+}
		\end{align}
for all $n\in\Bbb{N}$. Let $u\in\V_2$. We can find $n\in\Bbb{N}$ such that
\[\frac{1}{4^{n}}v\leqslant u,\quad \frac{1}{4^{n}} f(2^{n}z)\leqslant Q(x)+u,\quad Q(x)\leqslant \frac{1}{4^{n}} f(2^{n}z)+u,\quad z\in\left\{x,y,\frac{x+y}{2}, \frac{x-y}{2}\right\}.\]
From \eqref {t1.8} and \eqref{t1.8+}, we deduce
  \begin{align*}
			2Q\left(\frac{x+y}{2}\right) + 2Q\left(\frac{x-y}{2}\right) \leqslant Q(x)+Q(y)+7u,  \\
Q(x)+Q(y)\leqslant	2Q\left(\frac{x+y}{2}\right) + 2Q\left(\frac{x-y}{2}\right) +7u.
		\end{align*}
By Lemma \ref{Lem.ab}, we infer that
\begin{equation}\label{t1.9}
2Q\left(\frac{x+y}{2}\right) + 2Q\left(\frac{x-y}{2}\right) = Q(x)+Q(y),\quad x,y,x-y\in\K_1.
\end{equation}
Since $ f(0) $ is bounded, by Lemma 1, we have $Q(0)=0$. Letting $y=0$ in \eqref{t1.9}, we get $4Q\left(\frac{x}{2}\right) =Q(x)$ for all $x\in\K_1$. Thus, \eqref{t1.9} implies
    \[
       Q(x+y)+Q(x-y)=2Q(x)+2Q(y),\quad x,y,x-y\in\K_1.
    \]
    To show the uniqueness of $ Q $, let $ q:\K_1\to\K_2 $ be another quadratic mapping satisfying $q(x) \in (\gamma v)(f(x))(\gamma v) $ for all $x\in\K_1$. Then,
    \begin{align*}
        f(x) \leqslant q(x) + \gamma v \quad \text{\rm and} \quad q(x) \leqslant f(x) + \gamma v,\quad x\in\K_1.
    \end{align*}
    Hence
    \begin{align}\label{t1.10}
       \frac{1}{4^{n}} f(2^{n}x) \leqslant q(x) + \frac{\gamma}{4^n} v \quad \text{\rm and} \quad q(x) \leqslant  \frac{1}{4^{n}} f(2^{n}x) + \frac{\gamma}{4^n} v,\quad x\in\K_1.
    \end{align}
    Let $u\in\V_2$ and $x\in\K_1$. We can find $n\in\Bbb{N}$ such that
\[\frac{\gamma}{4^{n}}v\leqslant u,\quad \frac{1}{4^{n}} f(2^{n}x)\leqslant Q(x)+u,\quad Q(x)\leqslant \frac{1}{4^{n}} f(2^{n}x)+u.\]
From \eqref {t1.10},  we deduce
\[  Q(x) \leqslant q(x) + 2u\quad \text{\rm and} \quad q(x) \leqslant  Q(x)+2u.\]
By Lemma \ref{Lem.ab}, we infer that $Q(x)=q(x)$. This proves the uniqueness of $Q$.
\end{proof}
\begin{cor} Consider a locally convex cone $ (\K , \V) $ and a mapping   $ f : \K \to  (\Bbb{\overline{R}},\xi)$ $($or $f : \K \to (\Bbb{\overline{R}}_+, \xi))$   that satisfies the following condition
\[
        2f\left(\frac{x+y}{2}\right) + 2f\left(\frac{x-y}{2}\right)\in \varepsilon \left(f(x) + f(y) \right) \varepsilon
    \]
for some $\varepsilon>0$ and for all $x, y \in \K$ with $x-y\in\K$, where  $ f(0)\ne +\infty$. Then there exists a unique quadratic mapping $Q : \K \to  \Bbb{\overline{R}}$ $($or $Q: \K \to \Bbb{\overline{R}}_+)$  such that
\begin{align*}
 Q(x)\in \frac{\lambda+2}{3}\varepsilon (f(x))\frac{\lambda+2}{3}\varepsilon
\end{align*}
holds for all $ x \in \K $, with $\lambda>0$ chosen such that $|f(0)|\leqslant\lambda\varepsilon$.
\end{cor}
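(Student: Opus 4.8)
The plan is to derive the corollary as a direct specialization of Theorem~\ref{t1.1}. Concretely, I would apply that theorem with $\K_1 = \K$ and with $\K_2$ equal to $(\overline{\Bbb{R}}, \xi)$ (respectively $(\overline{\Bbb{R}}_+, \xi)$), so that the abstract $0$-neighborhood system is $\V_2 = \xi = \{\varepsilon > 0 : \varepsilon \in \Bbb{R}\}$, and I would take the bounded element demanded by the theorem to be $v = \varepsilon$ itself, the scalar furnished by the hypothesis. Under this identification the symmetric neighborhood $\varepsilon(f(x)+f(y))\varepsilon$ is exactly $v(f(x)+f(y))v$, so the membership assumed in the corollary coincides verbatim with condition~\eqref{t1.2}.

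Before invoking the theorem I would discharge its three structural requirements on $\K_2$. That $(\overline{\Bbb{R}}, \xi)$ and $(\overline{\Bbb{R}}_+, \xi)$ are full locally convex cones was already recorded among the examples above. For separatedness I would observe that the symmetric topology restricts to the ordinary (Hausdorff) topology on $\Bbb{R}$ while $+\infty$ sits as an isolated point, whence the symmetric topology is Hausdorff and the cone is separated. For completeness I would argue that a symmetric Cauchy net is forced either to be eventually constant equal to $+\infty$ or to coincide with an ordinary Cauchy net in $\Bbb{R}$, which then converges within the cone; a net drifting to $+\infty$ without stabilizing violates the symmetric Cauchy condition $a_i \leqslant a_j + \varepsilon$. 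Finally, $v = \varepsilon$ is a bounded element, since every positive real is both lower and upper bounded in these cones, and the hypothesis $f(0) \neq +\infty$ forces $f(0) \in \Bbb{R}$, so that $f(0)$ is a bounded element in the sense required by the theorem.

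With the hypotheses verified, Theorem~\ref{t1.1} yields a unique quadratic mapping $Q \colon \K \to \overline{\Bbb{R}}$ (respectively $Q \colon \K \to \overline{\Bbb{R}}_+$) together with a constant $\gamma = \frac{\lambda + 2}{3}$ such that $Q(x) \in (\gamma \varepsilon)(f(x))(\gamma \varepsilon)$ for all $x \in \K$; here $\lambda > 0$ is chosen so that $f(0) + \lambda \varepsilon \geqslant 0$ and $f(0) \leqslant \lambda \varepsilon$, both of which follow at once from $|f(0)| \leqslant \lambda \varepsilon$. Rewriting $(\gamma\varepsilon)(f(x))(\gamma\varepsilon)$ as $\frac{\lambda+2}{3}\varepsilon\,(f(x))\,\frac{\lambda+2}{3}\varepsilon$ gives precisely the asserted inclusion, and the uniqueness of $Q$ transfers directly from the theorem. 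The only step that is more than a routine translation is the completeness claim for $(\overline{\Bbb{R}}_+, \xi)$ under the symmetric topology in the presence of $+\infty$; confirming that nets approaching $+\infty$ cannot be symmetric Cauchy unless they eventually equal $+\infty$ is the genuine point to secure.
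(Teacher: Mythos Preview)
Your proposal is correct and follows exactly the approach intended by the paper: the corollary is stated without proof there because it is an immediate specialization of Theorem~\ref{t1.1} to $\K_2=(\overline{\Bbb{R}},\xi)$ (resp.\ $(\overline{\Bbb{R}}_+,\xi)$) with $v=\varepsilon$, and the explicit constant $\gamma=\frac{\lambda+2}{3}$ is read off from the proof of that theorem together with the identification of the boundedness condition on $f(0)$ with $|f(0)|\leqslant\lambda\varepsilon$.
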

\begin{rem}
    Let $(\K_1, \V_1)$ be a locally convex cone and $(\K_2, \V_2)$ a separated full uc-cone with the generating element $v$.
    Assume $(\K_2, \V_2)$ is complete under the symmetric topology.  Suppose a mapping $ f:\K_1\to\K_2 $ satisfies
    \[
        2f\left(\frac{x+y}{2}\right) + 2f\left(\frac{x-y}{2}\right) \in v(f(x) + f(y))v
    \]
    for all $ x, y \in\K_1$ with $x-y\in\K_1$, where $ f(0) $ is bounded. Then, by Theorem \ref{t1.1},  there exists a unique  quadratic mapping $ Q:\K_1\to\K_2 $ and a positive real number $ \gamma $ such that
    \[
        Q(x) \in \gamma v(f(x))\gamma v
    \]
    for all $ x \in\K_1$.
\end{rem}
\begin{cor}
Consider a locally convex cone $(\K_1, \V_1)$ and a separated full uc-cone $(\K_2, \V_2)$, where $\K_2 $ is additionally equipped with the structure of a real vector space. Suppose $(\K_2, \V_2)$  is complete with respect to its symmetric topology. If for some $ \varepsilon > 0 $, a given mapping $ f:\K_1\to\K_2 $ satisfies the following condition:
\[
        2f\left(\frac{x+y}{2}\right) + 2f\left(\frac{x-y}{2}\right) \in \varepsilon v(f(x) + f(y))\varepsilon v
    \]
for all $ x, y \in\K_1$ with $x-y\in\K_1$, where $ v $ denotes the generating element of $ \V_2 $, then there exists a unique quadratic mapping $ Q:\K_1\to\K_2 $ satisfying
    \[Q(x)\in \left(\frac{r\varepsilon}{3} v\right)\left(f(x)-\frac{1}{3}f(0)\right)\left(\frac{r\varepsilon}{3} v\right),\quad x \in \K _1 \]
    for all $ r>1$.
\end{cor}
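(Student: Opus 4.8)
The plan is to obtain existence and uniqueness directly from Theorem \ref{t1.1} and then to sharpen the approximation bound by exploiting the extra vector-space structure of $\K_2$. First I would verify the hypotheses of Theorem \ref{t1.1}. Since $(\K_2,\V_2)$ is a uc-cone with generating element $v$, every element of $\V_2$ has the form $\mu v$ and one has $v\leqslant\lambda(\mu v)$ whenever $\lambda\mu\geqslant 1$, so $v$, and hence $\varepsilon v$, is upper bounded and therefore bounded. Moreover, putting $x=y=0$ in the hypothesis gives $4f(0)\in \varepsilon v(2f(0))\varepsilon v$, whence (using additive inverses) $2f(0)\leqslant\varepsilon v$ and $-2f(0)\leqslant\varepsilon v$; thus $f(0)$ is bounded. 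Consequently Theorem \ref{t1.1}, applied with the bounded element $\varepsilon v$, yields a unique quadratic mapping $Q(x)=\lim_{n}\tfrac{1}{4^n}f(2^n x)$ with $Q(0)=0$, and any quadratic mapping lying within a fixed symmetric neighborhood of $f$ must coincide with it. Since $f(0)$ is bounded, any candidate meeting the corollary's bound lies within such a neighborhood, so existence and uniqueness are already settled.

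Next I would run a sharper iteration than the one in Theorem \ref{t1.1}, using that $\K_2$ is a real vector space. Setting $y=0$ and then replacing $x$ by $2x$ gives $4f(x)\leqslant f(2x)+f(0)+\varepsilon v$ and $f(2x)+f(0)\leqslant 4f(x)+\varepsilon v$. Dividing by $4$ and, crucially, subtracting $\tfrac14 f(0)$ (legitimate only because of the vector-space structure) produces the two recursions $f(x)\leqslant\tfrac14 f(2x)+\tfrac14 f(0)+\tfrac{\varepsilon}{4}v$ and $\tfrac14 f(2x)\leqslant f(x)-\tfrac14 f(0)+\tfrac{\varepsilon}{4}v$. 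A straightforward induction, summing the geometric series $\sum 4^{-k}$, then gives for every $N$ the estimate $\tfrac{1}{4^N}f(2^N x)\leqslant \bigl(f(x)-\tfrac13 f(0)\bigr)+\tfrac{\varepsilon}{3}v+\tfrac{1}{3\cdot 4^N}(f(0)-\varepsilon v)$ together with the companion inequality $f(x)-\tfrac13 f(0)\leqslant \tfrac{1}{4^N}f(2^N x)+\tfrac{\varepsilon}{3}v-\tfrac{1}{3\cdot 4^N}(f(0)+\varepsilon v)$.

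The decisive step is the passage to the limit, which I expect to be the main obstacle, since cone inequalities do not survive limits automatically. Here I would combine two facts. On one hand, $\tfrac{1}{4^N}f(2^N x)\to Q(x)$ in the symmetric topology, so for any $u'\in\V_2$ and all large $N$ one has $Q(x)\leqslant \tfrac{1}{4^N}f(2^N x)+u'$ and $\tfrac{1}{4^N}f(2^N x)\leqslant Q(x)+u'$. On the other hand, by Lemma \ref{Lem.0} the residual terms $\tfrac{1}{3\cdot 4^N}(f(0)-\varepsilon v)$ and $-\tfrac{1}{3\cdot 4^N}(f(0)+\varepsilon v)$ tend to $0$ in the symmetric topology, because $f(0)$, $\varepsilon v$ and hence these combinations are bounded while $4^{-N}\to 0$; thus for any $u''\in\V_2$ each residual is $\leqslant u''$ for large $N$. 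Given an arbitrary $u\in\V_2$, I would split $u=\tfrac12 u+\tfrac12 u$ and take $u'=u''=\tfrac12 u$; adding the estimates then yields, for every $u\in\V_2$, both $Q(x)\leqslant \bigl(f(x)-\tfrac13 f(0)\bigr)+\tfrac{\varepsilon}{3}v+u$ and $f(x)-\tfrac13 f(0)\leqslant Q(x)+\tfrac{\varepsilon}{3}v+u$.

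Finally I would convert the $u$-quantified estimate into the stated $r$-form. For $r>1$, choosing $u=\tfrac{(r-1)\varepsilon}{3}v\in\V_2$ turns $\tfrac{\varepsilon}{3}v+u$ into $\tfrac{r\varepsilon}{3}v$, so the two inequalities become $Q(x)\leqslant \bigl(f(x)-\tfrac13 f(0)\bigr)+\tfrac{r\varepsilon}{3}v$ and $f(x)-\tfrac13 f(0)\leqslant Q(x)+\tfrac{r\varepsilon}{3}v$, that is, $Q(x)\in\bigl(\tfrac{r\varepsilon}{3}v\bigr)\bigl(f(x)-\tfrac13 f(0)\bigr)\bigl(\tfrac{r\varepsilon}{3}v\bigr)$ for all $r>1$, as required. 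The slack $r>1$ serves precisely to absorb the arbitrarily small neighborhood $u$ left over from the limit, and it is the vector-space cancellation that makes both the shift $-\tfrac13 f(0)$ and this sharp constant available.
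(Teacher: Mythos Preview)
Your argument is correct, but it proceeds quite differently from the paper. The paper exploits the fact that a separated uc-cone which is also a real vector space is, under the gauge $\rho(a)=\inf\{\mu>0:\mu^{-1}a\in v(0)v\}$, a Banach space; it then rewrites the hypothesis as a norm inequality $\rho\bigl(2f(\tfrac{x+y}{2})+2f(\tfrac{x-y}{2})-f(x)-f(y)\bigr)\leqslant\varepsilon$ and runs the classical Hyers telescoping in $(\K_2,\rho)$, obtaining $\rho\bigl(Q(x)-f(x)+\tfrac13 f(0)\bigr)\leqslant\tfrac{\varepsilon}{3}$ and hence the stated neighborhood inclusion for every $r>1$. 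You instead stay entirely inside the order-theoretic framework: you invoke Theorem~\ref{t1.1} for existence and uniqueness of $Q$, and then redo the iteration directly with cone inequalities, using subtraction (available because $\K_2$ is a vector space) to shift by $\tfrac13 f(0)$ and Lemma~\ref{Lem.0} to kill the residual terms in the limit. The paper's route is shorter once one accepts the Banach-space identification, and it makes transparent why the constant $\tfrac{\varepsilon}{3}$ and the slack $r>1$ appear (the latter from passing from $\rho(a)\leqslant\mu$ to $a\in(\mu' v)(0)(\mu' v)$ for $\mu'>\mu$). Your route avoids the seminorm machinery altogether and, as a bonus, handles the quadraticity and uniqueness of $Q$ explicitly via Theorem~\ref{t1.1}, points the paper's own proof leaves implicit.
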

\begin{proof}
The symmetric topology on $\K_2$ is known to be Hausdorff (see \cite{KR}, I.3.9), which implies that  the pair $(\K_2, \V_2)$ becomes a Banach space when endowed with the norm
$\rho$ defined as:
\[
\rho(a) = \inf\{\mu > 0:~ \mu^{-1} a \in v(0)v\}, \quad a \in\K_2.
\]
Furthermore, the vector space structure of $\K_2$  guarantees that all its elements are bounded. Consequently,  the quantity 
$\rho(a)$ remains finite for every $a\in\K_2$.
  From our initial assumptions, we obtain
   \[2f\left(\frac{x+y}{2}\right) + 2f\left(\frac{x-y}{2}\right) - f(x) - f(y)\in (\varepsilon v)(0)(\varepsilon v)\]
   for all $ x, y \in\K_1$  with $x-y\in\K_1$.  This relationship can be alternatively formulated as
    \[
        \rho\left(2f\left(\frac{x+y}{2}\right) + 2f\left(\frac{x-y}{2}\right) - f(x) - f(y)\right) \leqslant \varepsilon
    \]
    for all $ x, y \in\K_1$  with $x-y\in\K_1$. By employing Hyers' approach and taking $ y = 0 $, we obtain 
    \[
        \rho\left(4f\left(\frac{x}{2}\right) - f(x) - f(0)\right) \leqslant \varepsilon,\quad x\in\K_1.
    \]
    Hence,
\begin{align}\label{p1.cor}
\rho\left(\frac{1}{4^{n+1}}f(2^{n+1}x)-\frac{1}{4^{m}}f(2^{m}x)+\sum_{k=m}^{n}\frac{1}{4^{k+1}}f(0)\right)\leqslant \sum_{k=m}^{n}\frac{\varepsilon}{4^{k+1}}
\end{align}
for all $ x \in \K _1 $ and $n\geqslant m\geqslant 0$. Based on  \eqref{p1.cor}, we observe that the sequence 
 $\left\{\frac{1}{4^{n}}f(2^{n}x)\right\}_{n\geqslant 1}$  is Cauchy in the Banach space
$(\K_2, \rho)$. Since the space $\K_2$ is complete with respect to the norm $\rho$, this sequence must converge. This allows us to introduce a well-defined mapping $Q : \K_1\to \K_2$ through the pointwise limit:
\[Q(x)=\lim_{n\to\infty}\frac{1}{4^{n}}f(2^{n}x),\quad x \in \K _1.\]
Setting $m=0$ and letting $n$ approach infinity in \eqref{p1.cor} yields
\[\rho\left(Q(x)-f(x)+\frac{1}{3}f(0)\right)\leqslant\frac{\varepsilon}{3},\quad x\in\K_1.\]
   Consequently, for all $x \in \K _1$ and $r>1$, we have   $Q(x)-f(x)+\frac{1}{3}f(0)\in \frac{r\varepsilon}{3} (v(0)v)$. This implies that
\[Q(x)\in \left(\frac{r\varepsilon}{3} v\right)\left(f(x)-\frac{1}{3}f(0)\right)\left(\frac{r\varepsilon}{3} v\right),\quad x \in \K _1 ,~ r>1.\] 
\end{proof}

\end{document}